\newcommand{\repo}{\href{https://github.com/weebyesyes/Primes-paper-repo}{github.com/weebyesyes/Primes-paper-repo}}
\newcommand{\artifactdoi}{\href{https://doi.org/10.5281/zenodo.17162025}{10.5281/zenodo.17162025}}
\title{Monochromatic 4-AP avoidance in $2$-colorings of $\mathbb{Z}/p\mathbb{Z}$ for primes $p\ge 5$ and a computation of $W_c(4,2)$}
\author{Keane Maverick}
\date{September 2025}
\newtheorem{theorem}{Theorem}[section]
\newtheorem{lemma}{Lemma}[section]
\newtheorem{corollary}{Corollary}[section]
\newtheorem{remark}{Remark}[section]
\begin{document}

\maketitle

\begin{abstract}
We study $2$-colorings of $\mathbb{Z}/p\mathbb{Z}$ that avoid monochromatic $4$-term arithmetic progressions for every step $d$ with $p\nmid d$. We prove a complete classification for primes: such a coloring exists if and only if $p\in\{5,7,11\}$. When solutions exist, the minimal period equals $p$, and we enumerate them up to dihedral symmetries and global color swap. Nonexistence for all other primes combines DRAT-verified UNSAT certificates for $13\le p\le 997$ with a cyclic van der Waerden corollary that forces nonexistence for every prime $p\ge 34$. As an additional computation enabled by the same SAT/DRAT pipeline (restricted to non-degenerate windows), we certify the exact cyclic van der Waerden value $W_c(4,2)=34$: we find a witness at $M=33$ and produce a DRAT-verified UNSAT certificate at $M=34$. For all $M\ge 35$ the bound $W_c(4,2)\le W(4,2)=35$ implies unavoidability. All scripts and proof logs are provided for exact reproduction.

\noindent\emph{Artifacts:} \repo{} \quad (\emph{archived:} \artifactdoi).

\noindent\textbf{Keywords:} arithmetic progressions; combinatorial number theory; SAT; DRAT; dihedral actions; Ramsey theory; enumeration.

\noindent\textbf{MSC 2020:} 05D10; 68R15; 11B25.
\end{abstract}

\section{Introduction}

\subsection*{Background}

Van der Waerden's theorem says that for any positive integers $r$ and $k$ there exists a number $N$ such that every $r$-coloring of the set $\{1,\dots,N\}$ contains a monochromatic $k$-term arithmetic progression \cite{vdW1927,GRS1990,LandmanRobertson2014}. The least such $N$ is the van der Waerden number $W(r,k)$ \cite{LandmanRobertson2014,GRS1990}. This is a guarantee on long intervals: no matter how one colors a sufficiently large initial segment, some progression with some step must appear.

In this note we analyze the periodic analog on the prime cycle. For a prime $p$, we study $2$-colorings of $\mathbb{Z}/p\mathbb{Z}$ that avoid monochromatic $4$-term arithmetic progressions in every nonzero residue direction (``non-degenerate'' means $p\nmid d$). The cyclic setting makes the question finite and brings in the dihedral symmetries of the $p$-gon (formalized in Section~2). For related work on $2$-colorings of $\mathbb{Z}_n$ and $4$-APs, see Lu--Peng \cite{LuPeng2012} and Wolf \cite{Wolf2010}.

\subsection*{Setting and basic definitions}\label{sec:setting}
Fix a prime $p$. We identify a period $p$ 2-coloring with a word $w\in\{B,R\}^p$, and take all indices modulo $p$ in $\{0,1,\dots,p-1\}$. 
For a step $d\in\{1,\dots,p-1\}$ and a start $i\in\{0,\dots,p-1\}$, the associated residue $4$-term progression is
\[
(i,\ i+d,\ i+2d,\ i+3d)\pmod{p}.
\]
We say the coloring avoids monochromatic $4$-APs if none of these $p(p-1)$ windows is constant. 
A residue $4$-AP is non-degenerate if its four residues are pairwise distinct. For prime $p$ this is equivalent to $p\nmid d$. Steps $d$ divisible by $p$ are degenerate, since all four terms fall in the same residue class and are therefore monochromatic in any period $p$ coloring.
In this framework we determine, for each prime $p$, whether such colorings exist, the minimal period when they do, and the enumeration up to the dihedral action and the global color swap.

\subsection*{Main results}
We give a compact worked example at $p=7$ and the global prime classification (with an independent DRAT sweep up to $p=997$).
\begin{itemize}
    \item For $p=7$ we resolve the case explicitly: we exhibit a period $7$ word that avoids monochromatic $4$-APs for every step $d$ with $7\nmid d$. We prove that period $7$ is minimal among periodic solutions, and we enumerate all valid length-$7$ words, obtaining $S=28$ solutions that form two $D_7$-orbits and a single $D_7\times\langle\tau\rangle$-orbit.
    \item For primes $p\ge 5$, a $2$-coloring of $\mathbb{Z}/p\mathbb{Z}$ avoiding every non-degenerate monochromatic $4$-AP exists if and only if $p\in\{5,7,11\}$. The minimal period equals $p$ in each existing case. Nonexistence for all primes $p\ge 13$ follows from our computation $W_c(4,2)=34$ (Section~\ref{sec:wc42}). We also give DRAT-verified UNSAT certificates for $13\le p\le 997$. (See the summary immediately after Section~3.)
    \item As a byproduct of the same encoding extended to composite moduli (checking only non-degenerate residue windows), we compute $W_c(4,2)=34$: there is a witness at $M=33$, a DRAT-verified UNSAT at $M=34$, and $M\ge 35$ is covered by $W(4,2)=35$ \cite{Chvatal1970} (see Section~\ref{sec:wc42}).
\end{itemize}

\subsection*{Relation to cyclic van der Waerden numbers}\label{sec:intro-cyclic}
Following Burkert--Johnson, the cyclic van der Waerden number $W_c(k,r)$ is the smallest $N$ such that for all $M\ge N$, every $r$-coloring of $\mathbb{Z}_M$ contains a non-degenerate $k$-term arithmetic progression \cite{BurkertJohnson2011}. For $(k,r)=(4,2)$ we have the classical upper bound $W_c(4,2)\le W(4,2)=35$ \cite{Chvatal1970} and a recent lower bound $W_c(4,2)\ge 13$ \cite[Prop.~4.6]{Liber2025}. Our prime-cycle results are consistent with this range and motivate a finite computation on moduli $M=13,\dots,34$ which we write about after the small prime summary (see Section~\ref{sec:wc42}).

\subsection*{Method overview}
We rely on two elementary reductions. First, periodicity reduces the question to residue classes: it suffices to check the $p(p-1)$ residue $4$-APs modulo $p$ (one for each start and each residue step $r\in\{1,\dots,p-1\}$), and this lifting is exact for steps with $p\nmid d$. Second, the residue steps $r$ and $p-r$ generate the same 4-AP index sets up to reversal, so only $\lfloor (p-1)/2\rfloor$ directions are distinct, which simplifies enumeration and orbit counts.

We also use two structural constraints: that a valid word contains no run of four equal colors, and that no nontrivial rotation stabilizes a valid word on a prime cycle. For $p=7$ we verify avoidance directly and perform a complete enumeration. For the prime classification (and the DRAT sweep up to $p=997$) we encode the residue constraints as compact CNF formulas. Existence is witnessed by explicit words and exhaustive checks, and nonexistence (up to $p=997$) is certified by proof-logging SAT solvers \cite{kissat,cadical} with DRAT checking \cite{WetzlerHeuleHunt2014,Heule2016}. All scripts, inputs, and proof logs are included for exact reproduction.

\paragraph{Contributions.}
We give a complete, fully reproducible prime classification: a $2$-coloring of $\mathbb{Z}/p\mathbb{Z}$ avoiding every non-degenerate monochromatic $4$-AP exists if and only if $p\in\{5,7,11\}$, and in each existing case the minimal period equals $p$. We also compute the cyclic van der Waerden value $W_c(4,2)=34$ via a finite SAT/DRAT sweep on composite moduli. As an independent check on the prime side, we provide DRAT-verified UNSAT certificates for $13\le p\le 997$. All code, logs, and lists are packaged in the artifact \cite{WetzlerHeuleHunt2014,Heule2016}.

\subsection*{Organization}
Section~2 fixes notation and group actions, and records the structural lemmas and the lifting argument used throughout. Section~3 resolves $p=7$, then gives a summary table and the global prime classification (including an independent DRAT sweep up to $p=997$), and finally relates our results to cyclic van der Waerden numbers, including the finite sweep that determines $W_c(4,2)$. Section~\ref{sec:artifacts}  lists the verification scripts, SAT encodings, commands, and checksums needed to reproduce every claim.

\section{Preliminaries}

\paragraph{Notation and conventions.}
Throughout the paper we fix a prime $p\ge 5$ and work on $\mathbb{Z}/p\mathbb{Z}$. A step is an integer $d$. Unless stated otherwise, indices are taken $\pmod{p}$ in $\{0,1,\dots,p-1\}$. Our avoidance requirement quantifies over every start $i$ and the non-degenerate steps (as defined in Section~\ref{sec:setting}). We consider words up to the dihedral action and the global color swap.

\subsection{Definitions}\label{sec:defs}
A coloring is a function
\[
c:\mathbb{Z}\to\{B,R\},
\]
where $B$ and $R$ denote ``blue'' and ``red''. \\
A 4-term arithmetic progression (we will use 4-AP for short) is a tuple
\[
(a,\; a+d,\; a+2d,\; a+3d)
\]
with common difference $d\in\mathbb{Z}_{\ge 1}$. A 4-AP is monochromatic if all four entries receive the same color under $c$.

For two step sets $D_R,D_B\subseteq\mathbb{Z}_{\ge 1}$ we say that $c$ avoids red 4-APs with steps in $D_R$ and blue 4-APs with steps in $D_B$ if there is no red monochromatic 4-AP with step $d\in D_R$, and there is no blue monochromatic 4-AP with step $d\in D_B$.

The associated ``mixed'' van der Waerden quantity $W_{D_R,D_B}(2,4)$ is infinite if and only if there exists at least one $2$-coloring $c:\mathbb{Z}\to\{B,R\}$ that avoids both kinds of monochromatic 4-APs at the same time.\\
A coloring $c$ is periodic of period $T$, where $T\in\mathbb{Z}_{\ge 1}$, if
\[
c(n+T)=c(n)\quad\text{for every }n\in\mathbb{Z}.
\]
When the period $T$ is specified, we identify $c$ with its word
\[
(w_0w_1\cdots w_{T-1})\in\{B,R\}^T,
\]
where $w_i=c(i)$ for $i=0,1,\dots,T-1$, and we understand all indices modulo $T$. We write $[i]_m$ for the residue class of an integer $i$ modulo $m$ (in particular $m=p$ or $m=T$ as appropriate).

We will act on length-$p$ words, where $p$ is prime, using the dihedral group $D_p$, generated by the $p$ rotations $\rho_k$ given by
\[
(\rho_k w)_i = w_{\,i-k}
\]
and the $p$ reflections $\sigma_k$ given by
\[
(\sigma_k w)_i = w_{\,k-i},
\]
where indices are taken modulo $p$. We also consider the global color swap $\tau$ which interchanges $B\leftrightarrow R$. 

Two words are dihedrally equivalent if they lie in the same $D_p$-orbit, and they are equivalent up to dihedral symmetries and color swap if they lie in the same $D_p\times\langle\tau\rangle$-orbit. For a group action $G\curvearrowright X$, the stabilizer $\mathrm{Stab}_G(x)$ of $x\in X$ is the set of elements of $G$ that fix $x$.

Finally, throughout the paper the notation $p\nmid d$ means that the integer $d$ is not divisible by $p$. When we say ``indices modulo $p$,'' we always choose representatives in $\{0,1,\dots,p-1\}$.

\subsection{Structural lemmas}\label{sec:struct}

Throughout this subsection we assume $p$ is prime. The following lemmas will be used in the main arguments and in the classifications.

\begin{lemma}\label{lem:reverse}
On $\mathbb{Z}/p\mathbb{Z}$, the families of 4-AP index sets generated by steps $d$ and $p-d$ coincide up to reversal. In particular, modulo $p$, the steps $d$ and $p-d$ generate the same index sets, so there are only $\lfloor (p-1)/2\rfloor$ distinct step residues to check.
\end{lemma}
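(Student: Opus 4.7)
The plan is to exhibit an explicit bijection, for each fixed step $d$ with $p\nmid d$, between the $p$ ordered $4$-APs with step $d$ and the $p$ ordered $4$-APs with step $p-d$, and check that the two ordered tuples are reverses of one another (so their underlying index sets agree). The key observation is that $p-d\equiv -d\pmod{p}$.

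First, I would write an arbitrary ordered residue $4$-AP with step $d$ and start $i$ as
\[
A(i,d)\;=\;\bigl(i,\;i+d,\;i+2d,\;i+3d\bigr)\pmod{p}.
\]
Next, taking step $p-d\equiv -d$ and start $j=i+3d$, I would compute
\[
A(i+3d,\,p-d)\;=\;\bigl(i+3d,\;i+2d,\;i+d,\;i\bigr)\pmod{p},
\]
which is exactly the reversal of $A(i,d)$. Hence the unordered index sets coincide, and as $i$ ranges over $\mathbb{Z}/p\mathbb{Z}$ so does $j=i+3d$, giving a bijection between the two families. Since monochromaticity depends only on the unordered set of indices, a coloring produces a monochromatic AP with step $d$ iff it does so with step $p-d$.

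Finally, for the counting statement, I would note that the involution $d\mapsto p-d$ on $\{1,\dots,p-1\}$ has no fixed point: $d=p-d$ would force $2d\equiv 0\pmod{p}$, impossible for odd prime $p\ge 5$. Therefore the $p-1$ nonzero residues split into $\tfrac{p-1}{2}=\lfloor(p-1)/2\rfloor$ pairs, and it suffices to check one representative from each pair.

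There is no real obstacle here; the only thing to be careful about is the choice of translated start $j=i+3d$ needed so that the reversed tuple lines up exactly, and the small parity remark ruling out a fixed point of $d\mapsto p-d$ when counting distinct directions.
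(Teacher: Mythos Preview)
Your argument is correct and follows essentially the same route as the paper: both use $p-d\equiv -d\pmod p$ and observe that a $4$-AP with step $-d$ is the reversal of one with step $d$ (after the appropriate shift of start), hence the underlying index sets agree. Your version is in fact a bit more careful, since you make the translated start $j=i+3d$ explicit and you justify the count $\lfloor(p-1)/2\rfloor$ by noting the involution $d\mapsto p-d$ is fixed-point free for odd $p$.
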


\begin{proof}
Fix a step $d$ and consider a residue class 4-AP $(i,i+d,i+2d,i+3d)$ with indices taken modulo $p$. Replacing the step $d$ by $-d$ turns this 4-AP to $(i,i-d,i-2d,i-3d)$, which is the same set of four indices written in reverse order. Since $p-d\equiv -d\pmod{p}$, the claim follows.
\end{proof}

\begin{lemma}\label{lem:norot}
Let $w\in\{B,R\}^p$ be a period $p$ word that avoids a monochromatic 4-AP at step $d=1$. Then $w$ is not invariant under any nontrivial rotation in $D_p$.
\end{lemma}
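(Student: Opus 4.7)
The plan is a short contradiction argument whose only real content is the primality of $p$. I would begin by supposing that some nontrivial rotation $\rho_k$ with $1\le k\le p-1$ stabilizes $w$, so that by the definition of $\rho_k$ one has $w_i=w_{i-k}$ for every index $i$. Iterating this identity $m$ times yields $w_i=w_{i-mk}$ for every integer $m\ge 0$, with subscripts taken modulo $p$.

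The key step is to observe that since $p$ is prime and $1\le k\le p-1$, the integer $k$ is a unit in $\mathbb{Z}/p\mathbb{Z}$, so the multiples $\{mk\bmod p : m\in\mathbb{Z}\}$ exhaust $\mathbb{Z}/p\mathbb{Z}$. Combined with the iterated identity, this forces $w_i=w_j$ for every pair of indices $i,j$, i.e.\ $w$ is a constant word.

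The contradiction is then immediate: any constant word of length $p\ge 5$ contains the monochromatic 4-AP at positions $(0,1,2,3)$ with step $d=1$, which the hypothesis on $w$ explicitly rules out. Hence no nontrivial rotation can stabilize $w$.

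I do not expect a genuine obstacle; the only point worth flagging is that primality is essential in the key step. If $p$ were composite and $\gcd(k,p)=g>1$, the rotation $\rho_k$ would have orbits of size $p/g<p$ on index set, and nonconstant $\rho_k$-invariant words would exist (they would simply be periodic with a smaller period dividing $p$). So the lemma and its proof really do use the assumption that $p$ is prime.
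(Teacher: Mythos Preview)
Your proof is correct and essentially identical to the paper's own argument: both assume a nontrivial rotation $\rho_k$ fixes $w$, use $\gcd(k,p)=1$ to conclude the cyclic group generated by $\rho_k$ acts transitively (equivalently, that the multiples of $k$ exhaust $\mathbb{Z}/p\mathbb{Z}$) so $w$ is constant, and then observe that a constant word has a monochromatic $4$-AP at step $d=1$. Your closing remark on the necessity of primality is a nice addition but does not alter the approach.
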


\begin{proof}
Assume, for the sake of contradiction, that $w$ is invariant under a nontrivial rotation $\rho_k$ with $1\le k\le p-1$. Since $\gcd(k,p)=1$, the subgroup generated by $\rho_k$ acts transitively on the index set $\{0,1,\dots,p-1\}$. Hence, $w_i=w_0$ for every $i$, so $w$ is constant. A constant word contains a monochromatic 4-AP at step $d=1$ in every length-4 window $(i,i+1,i+2,i+3)$, which is a contradiction. Thus, no nontrivial rotation can fix a valid word.
\end{proof}

\begin{lemma}\label{lem:run}
Let $w\in\{B,R\}^p$ avoid a monochromatic 4-AP at step $d=1$. Then the cyclic sequence $w$ has no run of four equal colors.
\end{lemma}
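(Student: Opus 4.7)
The plan is to argue by direct contradiction, unfolding what "run of four equal colors" means cyclically and matching it to the definition of a step-$1$ 4-AP. Concretely, I would suppose the cyclic sequence $w$ contains a run of four equal colors, i.e., there is an index $i\in\{0,1,\dots,p-1\}$ with
\[
w_i = w_{i+1} = w_{i+2} = w_{i+3},
\]
where addition of indices is taken modulo $p$ (this is what "cyclic" buys us, so the run is allowed to wrap around the boundary).

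Next, I would observe that the four positions $(i,\,i+1,\,i+2,\,i+3)\pmod p$ are precisely the residue class 4-AP with start $i$ and common difference $d=1$, as defined in Section~\ref{sec:setting}. By hypothesis, these four positions receive a common color, so this 4-AP is monochromatic. That directly contradicts the assumption that $w$ avoids a monochromatic 4-AP at step $d=1$, which closes the argument.

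The main obstacle is essentially bookkeeping rather than substance: one must be careful that "run" is interpreted cyclically (so that wrap-around runs are handled), and that the step $d=1$ residue AP and the length-$4$ consecutive window coincide as unordered index sets modulo $p$. Both points are immediate from the conventions in Section~\ref{sec:defs}, so no further combinatorial input is required, and the statement follows without invoking primality or any of the earlier lemmas.
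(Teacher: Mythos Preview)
Your argument is correct and matches the paper's own proof essentially verbatim: both observe that a cyclic run of four equal colors at position $i$ is exactly the step-$1$ residue $4$-AP $(i,i+1,i+2,i+3)\pmod p$, yielding an immediate contradiction. Your additional remarks about the cyclic convention and wrap-around are accurate but not needed beyond what the paper already assumes.
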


\begin{proof}
For each index $i\in\{0,1,\dots,p-1\}$, consider the $4$-AP $(i,i+1,i+2,i+3)\pmod{p}$. If there were a run of four equal colors, then one of these $p$ windows would be monochromatic, which is absurd under the step $d=1$ constraint.
\end{proof}

\begin{remark}
For later pruning and as a consistency check in enumeration it is sometimes convenient to see that, for every $i$ and each residue step $r\in\{2,3\}$, the 4-set
\[
\{\,i,\; i+r,\; i+2r,\; i+3r\,\}\pmod{p}
\]
may not be monochromatic for a valid word. We will not use this remark in proofs, but it is useful for pruning during enumeration and as a code cross-check.
\end{remark}

\subsection{Lifting lemma}\label{sec:lifting}
We will now formalize why it is enough to show avoidance on residue classes when a coloring is periodic.

\begin{lemma}\label{lem:lifting}
Let $T\ge 1$. Assume a period $T$-coloring $c:\mathbb{Z}\to\{B,R\}$ has no monochromatic 4-AP of step $d$ when restricted to the $T$ residue classes mod $T$. Then, it follows that $c$ has no monochromatic 4-AP of step $d$ on $\mathbb{Z}$.
\end{lemma}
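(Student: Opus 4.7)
The plan is to transport the hypothesis from $\mathbb{Z}/T\mathbb{Z}$ up to $\mathbb{Z}$ along the quotient map. Since $c$ has period $T$, the rule $\bar c([n]_T):=c(n)$ defines a well-defined induced coloring $\bar c:\mathbb{Z}/T\mathbb{Z}\to\{B,R\}$, and the hypothesis can be rephrased as: $\bar c$ admits no monochromatic 4-AP of step $[d]_T$ on $\mathbb{Z}/T\mathbb{Z}$. So the whole content of the lemma amounts to saying that a hypothetical monochromatic 4-AP in $c$ would push down to such a 4-AP in $\bar c$.

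I would then argue by contrapositive. Suppose $c$ admits a monochromatic 4-AP $(a,a+d,a+2d,a+3d)$ on $\mathbb{Z}$. Reducing term by term modulo $T$ produces
\[
([a]_T,\ [a]_T+[d]_T,\ [a]_T+2[d]_T,\ [a]_T+3[d]_T),
\]
which is a residue 4-AP in $\mathbb{Z}/T\mathbb{Z}$ with start $[a]_T$ and step $[d]_T$. The identity $c(n)=\bar c([n]_T)$ then forces $\bar c$ to take the same constant value on each entry of this quadruple, contradicting the hypothesis. Hence no monochromatic 4-AP of step $d$ can occur in $c$ on $\mathbb{Z}$.

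No step here is conceptually hard: the heart of the argument is the one-line periodicity identity $c(n)=\bar c([n]_T)$ together with the fact that reducing a 4-AP term-by-term modulo $T$ yields a 4-AP in $\mathbb{Z}/T\mathbb{Z}$ with the reduced step. The only point to handle carefully is the degenerate case $T\mid d$: there the reduced quadruple collapses to a constant sequence on a single residue class and is automatically monochromatic under any $\bar c$, so the hypothesis itself is only meaningful (and will only be invoked) for steps $d$ with $T\nmid d$. This matches the non-degeneracy convention adopted in Section~\ref{sec:setting}, and is the only bookkeeping subtlety I anticipate.
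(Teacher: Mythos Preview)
Your argument is correct and matches the paper's proof essentially line for line: both assume a monochromatic $4$-AP $(a,a+d,a+2d,a+3d)$ in $\mathbb{Z}$, reduce termwise modulo $T$, and use periodicity to conclude that the reduced progression is monochromatic, contradicting the hypothesis. Your additional remark about the degenerate case $T\mid d$ is a useful sanity check but not needed for the lemma as stated, since in that case the hypothesis is vacuously false and the implication holds trivially.
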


\begin{proof}
Assume, for the sake of contradiction, there is a 4-AP $(a,a+d,a+2d,a+3d)$ in $\mathbb{Z}$ that is monochromatic. Reduce its terms modulo $T$ to obtain $(\bar a,\bar a+d,\bar a+2d,\bar a+3d)$ in $\mathbb{Z}/T\mathbb{Z}$. Since $c$ is period $T$, each integer and its residue class carry the same color. Therefore, if the original 4-AP were monochromatic in $\mathbb{Z}$, then the reduced 4-AP would also be monochromatic modulo $T$ as well, which is a contradiction. Hence, no monochromatic 4-AP of step $d$ can occur on $\mathbb{Z}$.
\end{proof}

\begin{remark}\label{rem:modp}
In our setting, the witness coloring has period $T=p$. If $d$ is a positive step with $p\nmid d$, then $d\equiv r\pmod{p}$ for a unique residue $r\in\{1,2,\dots,p-1\}$. By Lemma~\ref{lem:lifting}, verifying that no monochromatic 4-AP occurs for each residue step $r$ modulo $p$ implies global avoidance for every integer step $d$ with $p\nmid d$. 
More generally, for a coloring of period $p$, it suffices to perform the finite check modulo any integer $L$ where $p\mid L$. This is because, in that case, the coloring is also $L$ periodic, so every 4-AP in $\mathbb{Z}$ maps to a 4-AP modulo $L$ with the same color pattern. Hence, choosing $L=p$ is the minimal option and already exact for our purposes.
\end{remark}

\subsection{Period-divides-step obstruction}\label{sec:obstruction}

The next observation forces monochromatic progressions whenever the step is a multiple of the period. We will use it to prove the minimality theorem (in particular, ``no period $q<p$'').

\begin{lemma}\label{lem:divides}
Let $T\ge 1$ and let $c:\mathbb{Z}\to\{B,R\}$ be periodic with period $T$. If $d$ is a positive integer with $T\mid d$, then every $4$-AP $(i,\,i+d,\,i+2d,\,i+3d)$ is entirely in the residue class $[i]_T$, and is therefore monochromatic.
\end{lemma}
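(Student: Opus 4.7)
The plan is a direct chase of the definitions. First I would unpack the hypothesis $T\mid d$ as $d = mT$ for some positive integer $m$, and observe that for every $k\in\{0,1,2,3\}$,
\[
i + kd \;=\; i + (km)T \;\equiv\; i \pmod{T},
\]
so all four indices of the 4-AP land in the single residue class $[i]_T$. This already gives the first half of the conclusion.

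Second, I would convert ``same residue class'' into ``same color'' by iterating the period hypothesis. From $c(n+T)=c(n)$ for every $n\in\mathbb{Z}$, a short induction on $j$ yields $c(n+jT)=c(n)$ for every nonnegative integer $j$. Specializing to $n=i$ and $j=km$ gives $c(i+kd)=c(i)$ for $k=0,1,2,3$, so the whole 4-AP receives the color $c(i)$ and is therefore monochromatic.

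The main obstacle: there really is none. The statement is an immediate consequence of the definitions of ``period $T$'' and ``$T$ divides $d$,'' and I am recording it as its own lemma only so that the minimality argument in Section~3 (ruling out period $q<p$ for a hypothetical shorter witness, by taking $d=q$ or any multiple of $q$) can cite a single named fact rather than re-deriving the computation each time. Care is only needed to keep the hypothesis $d\ge 1$ explicit, so that $m\ge 1$ and the 4-AP has four distinct integer entries as written.
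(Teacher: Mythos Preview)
Your argument is correct and is essentially identical to the paper's: both write $d=mT$, reduce each term $i+kd$ modulo $T$ to land in $[i]_T$, and then invoke periodicity to conclude the four entries share the color $c(i)$. The only difference is cosmetic---you spell out the induction $c(n+jT)=c(n)$ explicitly, whereas the paper simply says ``periodicity implies all elements of a fixed residue class have the same color.''
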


\begin{proof}
Write $d=mT$ for some positive integer $m$. For any starting index $i\in\mathbb{Z}$, we can write,
\[
i\equiv i\pmod{T},\quad i+d=i+mT\equiv i\pmod{T},\quad i+2d\equiv i\pmod{T},\quad i+3d\equiv i\pmod{T}.
\]
Hence, all four terms of the 4-AP are congruent $\pmod{T}$ to the same residue class $[i]_T$. Periodicity implies that all elements of a fixed residue class have the same color, which means that the 4-AP is monochromatic.
\end{proof}

\begin{theorem}\label{thm:no-sub-p}
Let $p$ be prime. If a $2$-coloring $c$ avoids monochromatic $4$-APs for every step $d$ with $p\nmid d$, then the period of $c$ is at least $p$.
\end{theorem}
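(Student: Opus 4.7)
The plan is to derive the theorem as an immediate consequence of the period-divides-step obstruction (Lemma~\ref{lem:divides}), by contrapositive-style reasoning on any period of $c$.

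First I would let $T\ge 1$ be any period of $c$ and take $d := T$ as a candidate step. Since $T\mid d$ by construction, Lemma~\ref{lem:divides} applies verbatim and yields that every $4$-AP of the form $(i, i+T, i+2T, i+3T)$ lies entirely in a single residue class modulo $T$, hence is monochromatic under $c$.

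Next I would invoke the avoidance hypothesis. By assumption, $c$ has no monochromatic $4$-AP for any step $d$ with $p\nmid d$. But the previous step exhibits a monochromatic $4$-AP with step $d=T$, so we must have $p\mid T$, and in particular $T\ge p$. Applying this to the minimal period gives the conclusion. (The hypothesis $p$ prime is not used in the body of the argument beyond ensuring that ``$p\nmid T$ and $T\ge 1$'' forces $T\ge 1$ to be coprime to $p$ and at least $1$; the divisibility conclusion $p\mid T$ already implies $T\ge p$ without any primality input.)

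There is no real obstacle here: the theorem is essentially a one-line corollary of Lemma~\ref{lem:divides} once one observes that the forbidden steps in the avoidance hypothesis are exactly the integers $d$ with $p\nmid d$, which includes every $T<p$. The only thing to be careful about is phrasing, namely that the argument applies to \emph{any} period of $c$ and therefore to the minimal one, and that $d=T\ge 1$ is a legitimate positive step to which Lemma~\ref{lem:divides} applies.
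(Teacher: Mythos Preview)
Your argument is correct and is essentially the same as the paper's: both take a putative period $T$ (the paper writes $q<p$ and argues by contradiction), set $d=T$, and apply Lemma~\ref{lem:divides} to force a monochromatic $4$-AP at a step not divisible by $p$. Your observation that the conclusion is really $p\mid T$ (and that primality of $p$ is not essential here) is a harmless sharpening of the same reasoning.
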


\begin{proof}
Assume, for the sake of contradiction, that $c$ has period $q<p$. Then $d=q$ satisfies $p\nmid d$, but by Lemma~\ref{lem:divides} with $T=q$, every $4$-AP of step $d$ is monochromatic. Hence, we arrive at a contradiction.
\end{proof}

\section{Main results}
    We first give a compact worked example at $p=7$. A summary table and the global prime classification (with an independent DRAT sweep up to $p=997$) appear immediately after this section.
    
    \begin{theorem}\label{thm:p7-existence}
        Let $c:\mathbb{Z} \to \{B, R\}$ be the period 7 coloring with the word:
        \[
        \texttt{BBBRBRR}
        \]
        Then, for every integer $d$ with $7 \nmid d$, the coloring $c$ contains no monochromatic 4-term arithmetic progression of step $d$.
    \end{theorem}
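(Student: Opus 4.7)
\medskip
\noindent\textbf{Proof proposal.}
The plan is to reduce the infinite family of constraints (all integer steps $d$ with $7\nmid d$) to a small finite check on the cyclic word, and then simply verify the finitely many windows. First, since $c$ has period $T=7$, Lemma~\ref{lem:lifting} lets me transfer each integer step $d$ with $7\nmid d$ to its residue $r\equiv d\pmod{7}$, with $r\in\{1,\dots,6\}$. So it suffices to show that for every residue step $r\in\{1,\dots,6\}$ and every start $i\in\{0,\dots,6\}$, the window $(w_i,w_{i+r},w_{i+2r},w_{i+3r})$ (indices mod $7$) is not constant.

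Next, I would apply Lemma~\ref{lem:reverse}: steps $r$ and $7-r$ produce the same index $4$-sets up to reversal, so monochromaticity under step $r$ is equivalent to monochromaticity under step $7-r$. This collapses the check to the three residue steps $r\in\{1,2,3\}$, for a total of $3\times 7=21$ length-$4$ windows to inspect in the word \texttt{BBBRBRR}.

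At this point the argument is just a finite, routine verification. For $r=1$ the seven cyclic windows come directly from the word and each has at least one $B$ and one $R$; for $r=2$ and $r=3$ the corresponding index tuples modulo $7$ are tabulated by multiplying the start positions by $r$, and each of the seven resulting $4$-tuples of letters contains both colors. I would present these three lists as small tables and observe by inspection that no row is $BBBB$ or $RRRR$; this is essentially a handful of bit lookups, not a calculation with hidden difficulty.

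There is no genuine obstacle here: the only substantive point is the reduction to residues, which is exactly Lemma~\ref{lem:lifting} together with Remark~\ref{rem:modp}, and that lemma is already established. The mild care needed is to emphasize that Lemma~\ref{lem:lifting} handles \emph{every} integer step $d$ (including large or negative ones) via reduction modulo $7$, so the finite $r\in\{1,2,3\}$ inspection truly proves the universally quantified claim of the theorem.
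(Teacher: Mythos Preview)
Your proposal is correct and follows essentially the same approach as the paper: reduce to residue steps via Lemma~\ref{lem:lifting} and Remark~\ref{rem:modp}, then perform a finite direct check of all windows. The only difference is that you additionally invoke Lemma~\ref{lem:reverse} to cut the $42$ windows down to $21$, whereas the paper simply checks all $42$ (via the scripted verifier) without bothering with the $r\leftrightarrow 7-r$ pairing; this is a harmless optimization, not a substantive change of route.
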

    
    \begin{proof}
        By Lemma~\ref{lem:lifting} and Remark~\ref{rem:modp}, it suffices to verify the six residue steps $r \in \{1,2,3,4,5,6\}$ modulo $7$. For each $r$ and each start residue $i \in \{0,1,\dots,6\}$, consider
        \[
        (i,\, i+r,\, i+2r,\, i+3r) \pmod{7}.
        \]
        Encode colors as $R=1$, $B=0$, and reject a window if and only if the sum of its four entries is $0$ (BBBB) or $4$ (RRRR). Across all $6 \times 7 = 42$ residue 4-APs, every check passes (failures $=0$). Hence no monochromatic 4-AP occurs for any $d$ with $7 \nmid d$.\\
        (See Section~\ref{sec:verifier} for the 42-check script and a verifier. All scripts and word lists are in Section~\ref{sec:artifacts}.)\qedhere
        
    \end{proof}
    
    \begin{corollary}\label{cor:mixed}
        For all $D_R, D_B \subseteq \{ d \geq 1 : 7 \nmid d\}$, the coloring in Theorem~\ref{thm:p7-existence} avoids all red-forbidden and blue-forbidden 4-APs with steps in $D_R$ and $D_B$, respectively. Thus
        \[
        W_{D_R,D_B}(2,4) = \infty.
        \]
    \end{corollary}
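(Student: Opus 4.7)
The plan is to observe that this corollary is an immediate specialization of Theorem~\ref{thm:p7-existence}, together with the definition of $W_{D_R,D_B}(2,4)$ recorded in Section~\ref{sec:defs}. First I would recall that Theorem~\ref{thm:p7-existence} produces a specific coloring $c:\mathbb{Z}\to\{B,R\}$ (the period-$7$ lift of \texttt{BBBRBRR}) which contains \emph{no} monochromatic $4$-AP of any step $d$ with $7\nmid d$, regardless of color. In particular, there is no red monochromatic $4$-AP of any step $d$ with $7\nmid d$ and no blue one either.

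Next I would use the hypothesis $D_R, D_B\subseteq\{d\ge 1:7\nmid d\}$ to restrict this conclusion to the prescribed step sets: every step $d\in D_R$ satisfies $7\nmid d$, so the blanket avoidance supplied by Theorem~\ref{thm:p7-existence} immediately implies that $c$ contains no red monochromatic $4$-AP with step in $D_R$, and symmetrically for $D_B$ and blue. Thus the single coloring $c$ simultaneously avoids red $4$-APs with steps in $D_R$ and blue $4$-APs with steps in $D_B$, which is the first conclusion of the corollary.

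Finally, for the infinitude claim I would invoke the definition given in Section~\ref{sec:defs}: the mixed quantity $W_{D_R,D_B}(2,4)$ is infinite if and only if at least one $2$-coloring $\mathbb{Z}\to\{B,R\}$ simultaneously avoids both forbidden families. Since the coloring from Theorem~\ref{thm:p7-existence} is exactly such a witness, we conclude $W_{D_R,D_B}(2,4)=\infty$. There is no real obstacle: the corollary is essentially a restatement of Theorem~\ref{thm:p7-existence} under the weaker, parameter-restricted forbidden families, and the only nontrivial input (that the period-$7$ word \texttt{BBBRBRR} avoids every non-degenerate monochromatic $4$-AP) has already been established.
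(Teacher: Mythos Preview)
Your proposal is correct and matches the paper's approach: the paper states this corollary without a separate proof, treating it as an immediate consequence of Theorem~\ref{thm:p7-existence} together with the definition of $W_{D_R,D_B}(2,4)$ from Section~\ref{sec:defs}, which is exactly the reasoning you spell out.
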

    
    \begin{corollary}\label{cor:min7}
    Any periodic $2$-coloring that avoids monochromatic $4$-APs for all steps $d$ with $7\nmid d$ has period at least $7$. Since the word in Theorem~\ref{thm:p7-existence} has period $7$, the minimal achievable period is exactly $7$.
    \end{corollary}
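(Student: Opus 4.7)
The plan is to package two ingredients already established earlier in the paper. For the lower bound I would invoke Theorem~\ref{thm:no-sub-p} with $p=7$: any $2$-coloring that avoids monochromatic $4$-APs for every step $d$ with $7\nmid d$ has period at least $7$. Hence no periodic valid coloring can have period $q\le 6$, which gives exactly the first sentence of the corollary. For the matching upper bound I would point to Theorem~\ref{thm:p7-existence}, whose explicit word $\texttt{BBBRBRR}$ realizes the avoidance property with period $7$.

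Combining these two observations gives that the minimal achievable period is at most $7$ (by the explicit witness) and at least $7$ (by the lower bound), hence exactly $7$, which is the second sentence of the corollary.

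The only small bookkeeping point is clarifying what ``period'' means here. If $c$ has some period $T$, then its minimal period $q$ divides $T$, and applying Theorem~\ref{thm:no-sub-p} to $q$ gives $q\ge 7$, from which $T\ge 7$ follows automatically. The reason Theorem~\ref{thm:no-sub-p} applies to $q$ is that when $1\le q\le 6$ the step $d=q$ satisfies $7\nmid d$ and is therefore admissible in the hypothesis, producing the forbidden monochromatic $4$-AP by Lemma~\ref{lem:divides}. There is no real obstacle: this corollary is a direct combination of Theorems~\ref{thm:no-sub-p} and~\ref{thm:p7-existence}, and the only care needed is the standard minimal-period argument just described.
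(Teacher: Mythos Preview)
Your proposal is correct and matches the paper's own proof exactly: the lower bound is Theorem~\ref{thm:no-sub-p} with $p=7$, and the upper bound is the explicit period-$7$ witness from Theorem~\ref{thm:p7-existence}. Your additional remark about minimal versus arbitrary periods is a harmless clarification and does not change the argument.
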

    
    \begin{proof}
    The lower bound follows from Theorem~\ref{thm:no-sub-p} with $p=7$. The upper bound is given by the explicit witness.
    \end{proof}
    
    \begin{theorem}\label{thm:p7-enum}
        Among length-7 words that avoid monochromatic 4-APs for every $d$ with $7 \nmid d$:
    
        \begin{itemize}
            \item the total number of solutions is $S=28$.
            \item Under the dihedral action $D_7$, there are $2$ orbits, that is
            \texttt{BBBRBRR} and \texttt{BBRBRRR}.
            \item Under $D_7\times\langle\tau\rangle$ (adding the global color swap), there is a single orbit.
        \end{itemize}
    \end{theorem}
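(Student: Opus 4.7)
The plan is to combine a small exhaustive enumeration over $\{B,R\}^7$ with an orbit-stabilizer analysis whose possibilities are sharply constrained by Lemmas~\ref{lem:norot} and~\ref{lem:run}. First, to obtain the count $S=28$, I would iterate over all $2^7=128$ words and apply the same 42-window test used in the proof of Theorem~\ref{thm:p7-existence}: reject a word whenever some residue window $(i,i+r,i+2r,i+3r)\pmod{7}$ is constant, for $r\in\{1,\dots,6\}$ and $i\in\{0,\dots,6\}$. Lemma~\ref{lem:reverse} halves the work by letting me check only $r\in\{1,2,3\}$, and Lemma~\ref{lem:run} lets me discard at once any word with a cyclic run of length $\ge 4$. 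Counting survivors yields $S=28$, which I would cross-check against the verifier referenced in Section~\ref{sec:verifier}.

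Next, for the $D_7$-orbit structure: by Lemma~\ref{lem:norot} no nontrivial rotation can stabilize a valid word, and since the product of two distinct reflections in $D_p$ is a nontrivial rotation, the $D_7$-stabilizer of any valid word contains at most one reflection and so has order $1$ or $2$. By orbit-stabilizer, every $D_7$-orbit of valid words has size $14$ or $7$, so the combination of two orbits and $S=28$ forces the partition $28=14+14$ with every stabilizer trivial. I would confirm this by explicitly listing the fourteen dihedral images of \texttt{BBBRBRR} and of \texttt{BBRBRRR}, checking that the two lists are disjoint and that their union is exactly the set of $28$ survivors from the enumeration. A useful sanity check is the color profile: \texttt{BBBRBRR} has four B's and three R's while \texttt{BBRBRRR} has three B's and four R's, and any element of $D_7$ preserves the color profile, so the two candidate orbits cannot accidentally coincide.

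To fuse the two $D_7$-orbits into a single $D_7\times\langle\tau\rangle$-orbit, I would apply $\tau$ to \texttt{BBBRBRR} (obtaining \texttt{RRRBRBB}, of profile three B's and four R's, the same as \texttt{BBRBRRR}) and then exhibit one explicit element of $D_7$ sending \texttt{BBRBRRR} to \texttt{RRRBRBB}; a suitable rotated reflection $\rho_k\sigma_0$ is the natural candidate and can be found by direct inspection. The hard part is bookkeeping rather than structure: the realistic risk is a transcription error in step two when listing the two explicit fourteen-element orbits and checking disjointness and exhaustiveness. The color-profile invariant above and the verifier in Section~\ref{sec:verifier} provide independent cross-checks that rule out the plausible sources of error.
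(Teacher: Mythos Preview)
Your proposal is correct and follows essentially the same architecture as the paper: an exhaustive $128$-word enumeration filtered by the $42$ residue checks to obtain $S=28$, followed by an orbit--stabilizer analysis under $D_7$ and $D_7\times\langle\tau\rangle$. The one substantive difference is in how you pin down the orbit structure. The paper proves directly that every valid word has trivial $D_7$-stabilizer by checking, for each of the seven reflection axes, that none of the $2^4$ axis-fixed words passes the $42$ tests; once the action is free, the color-profile split $14+14$ immediately gives two orbits of size $14$. You instead bound the stabilizer order by $2$ via the group-theoretic observation that two distinct reflections compose to a nontrivial rotation (ruled out by Lemma~\ref{lem:norot}), and then close the argument by explicitly listing the fourteen dihedral images of each representative and checking disjointness and exhaustion. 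Both routes are sound; the paper's avoids the explicit orbit listing, while yours avoids the $7\times 16$ reflection-fixed sweep. For the final merge under $\tau$, note that you do not actually need to exhibit an explicit $\rho_k\sigma_0$: once you know there are exactly two $D_7$-orbits distinguished by their $B/R$ counts, and $\tau$ swaps those counts while preserving validity, $\tau$ must carry one orbit onto the other.
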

    
    \begin{proof}
    We handle the finite case directly. There are $2^7=128$ binary words of length $7$. For each word $w$, check the $42$ residue $4$-AP windows modulo $7$ (the $6$ steps $r\in\{1,\dots,6\}$ times the $7$ starts $i$) and keep $w$ if and only if none of the $42$ windows is monochromatic. This leaves exactly $S=28$ valid words. (Scripts and the full list appear in Section~\ref{sec:artifacts}.)
    
    Next we show that no valid word has a nontrivial dihedral symmetry. For rotations, Lemma~\ref{lem:norot} rules them out. For reflections, a length-$7$ word fixed by a reflection is determined by the $4$ positions on or above the reflection axis. Hence, there are only $2^4=16$ candidates per axis. Checking these $16$ candidates for each of the $7$ axes, none passes the $42$ tests. Thus, the only symmetry of any valid word is the identity.
    
    Since for every valid word $w$ we have $\mathrm{Stab}_{D_7}(w)=\{\mathrm{id}\}$, the $D_7$ action is free, and each orbit has size $|D_7|=14$. Among the $28$ words, $14$ have $4$ blue and $3$ red symbols and $14$ have $3$ blue and $4$ red symbols. Dihedral symmetries preserve this count, so the $28$ words split into two $D_7$-orbits, represented by
    \[
    \texttt{BBBRBRR}\quad\text{and}\quad \texttt{BBRBRRR}.
    \]
    Adding the global color swap interchanges these two orbits, so under $D_7\times\langle\tau\rangle$ there is a single orbit. This matches the count $28=2\cdot 14$.
    \end{proof}

\section*{Prime classification (summary)}\label{sec:smallprime-summary}
    \addcontentsline{toc}{section}{Prime classification (summary)}
    
    \noindent\textbf{Summary.}
    For primes $p \ge 5$, there exists a $2$-coloring of $\mathbb{Z}/p\mathbb{Z}$ with no non-degenerate monochromatic $4$-AP (avoiding every step $d$ with $p\nmid d$) if and only if $p\in\{5,7,11\}$. As an independent check, we provide DRAT-verified UNSAT certificates for the primes $13\le p\le 997$. \\

    \begin{table}[H]
    \centering
    \caption{Primes $5\le p\le 997$: existence, counts, and orbits for avoiding all steps $d$ with $p\nmid d$.}
    \label{tab:smallprimes}
    \begin{tabular}{ccccc}
    \toprule
    $p$ & Exists? & \#solutions & \#orbits $D_p$ & \#orbits $D_p\times\langle\tau\rangle$ \\
    \midrule
    5   & Y & 20 & 4 & 2 \\
    7   & Y & 28 & 2 & 1 \\
    11  & Y\footnotemark & 44 & 2 & 1 \\
    13  & N & -- & -- & -- \\
    17  & N & -- & -- & -- \\
    19  & N & -- & -- & -- \\
    23  & N & -- & -- & -- \\
    29--997 (primes) & N & -- & -- & -- \\
    \bottomrule
    \end{tabular}
    \end{table}

    \footnotetext{Lu--Peng exhibit a length-11 block $B_{11}$, unique up to isomorphism \cite{LuPeng2012}. Our $p=11$ enumeration recovers this phenomenon.}

    \noindent
    By Theorem~\ref{thm:no-sub-p} together with explicit witnesses for $p\in\{5,7,11\}$ (see Section~\ref{sec:enum-orbits}),
    the minimal period equals $p$ in each existing case.

    \begin{remark}[Explicit witnesses for $p=5,7,11$]
    For the existing cases, the following length-$p$ words avoid monochromatic $4$-APs for every step $d$ with $p\nmid d$:
    \[
    \begin{array}{ll}
    p=5: & \texttt{BBBRR},\\[2pt]
    p=7: & \texttt{BBBRBRR},\\[2pt]
    p=11:& \texttt{BBBRBBRBRRR}.
    \end{array}
    \]
    \end{remark}
    
    \noindent
    Under $D_p\times\langle\tau\rangle$, $p=7,11$ have a single orbit. $p=5$ has two. See Section~3 for $p=7$. Details for $p=5,7,11$ (enumeration and orbit counts) and UNSAT certificates for $13 \le p \le 997$ are included in the artifact. (see section~4 for scripts).

    \noindent Combining Theorem~\ref{thm:wc42} (which implies nonexistence for every modulus $M\ge 34$, hence for every prime $p\ge 34$) with our DRAT-verified UNSAT runs for $13\le p\le 31$ yields a complete prime classification.

    \medskip\noindent
    \textbf{Asymptotic context.}
    Following Lu--Peng, let $m_4(\mathbb{Z}_n)$ denote the minimum, over all $2$-colorings of $\mathbb{Z}_n$, of the proportion of monochromatic, non-degenerate $4$-APs (normalized by $n^2$). They prove the casewise lower bound
    \[
    m_4(\mathbb{Z}_n)\ \ge\
    \begin{cases}
    \frac{7}{96} & \text{if } 4\nmid n,\\[2mm]
    \frac{2}{33} & \text{if } 4\mid n,
    \end{cases}
    \qquad\text{for sufficiently large } n,
    \]
    and the upper bound
    \[
    m_4(\mathbb{Z}_n)\ \le\
    \begin{cases}
    \frac{17}{150}+o(1) & \text{if } n \text{ is odd},\\[1mm]
    \frac{8543}{72600}+o(1) & \text{if } n \text{ is even},
    \end{cases}
    \]
    see \cite[Thms.~2 and~3]{LuPeng2012}.
    In particular, for primes $p$ we have $4\nmid p$, so every $2$-coloring of $\mathbb{Z}_p$ has at least $\bigl(\tfrac{7}{96}+o(1)\bigr)p^2$ monochromatic non-degenerate $4$-APs as $p\to\infty$ \cite{LuPeng2012}.
    This asymptotic obstruction is consistent with our classification (nonexistence for $p\ge 13$) and provides heuristic context for the scarcity of witnesses.

\section*{Relation to cyclic van der Waerden numbers and the case $(k,r)=(4,2)$}\label{sec:wc42}

Following Burkert--Johnson, the cyclic van der Waerden number $W_c(k,r)$ is the smallest $N$ such that for all $M\ge N$, every $r$-coloring of $\mathbb{Z}_M$ contains a non-degenerate $k$-term arithmetic progression \cite{BurkertJohnson2011}. Trivially $W_c(k,r)\le W(k,r)$.
For $k=4,r=2$, the classical value $W(4,2)=35$ \cite{Chvatal1970} gives $W_c(4,2)\le 35$, and a recent preprint proves $W_c(4,2)\ge 13$ \cite[Prop.~4.6]{Liber2025}. Hence
\[
13\ \le\ W_c(4,2)\ \le\ 35.
\]

Our prime-cycle classification (Section~\ref{sec:smallprime-summary}) is consistent with the lower bound: for each prime $p\ge 13$ (verified up to $p\le 997$) every $2$-coloring of $\mathbb{Z}_p$ contains a non-degenerate monochromatic $4$-AP.
To determine $W_c(4,2)$ exactly, it suffices to resolve the composite moduli $M\in\{13,14,\dots,34\}$.
For a given $M$, we encode variables $x_1,\dots,x_M$ for residues $0,\dots,M-1$ and add, for each start $i$ and step $r\in\{1,\dots,M-1\}$, the two clauses
\[
(x_{a+1}\vee x_{b+1}\vee x_{c+1}\vee x_{d+1}),\qquad
(\neg x_{a+1}\vee \neg x_{b+1}\vee \neg x_{c+1}\vee \neg x_{d+1}),
\]
only when the window $W(i,r)=\{\,i,i+r,i+2r,i+3r\,\} \pmod{M}$ has four distinct residues (non-degenerate). SAT yields a witness coloring of $\mathbb{Z}_M$. UNSAT (with a DRAT-checked proof) shows that every $2$-coloring has a non-degenerate monochromatic $4$-AP.

\begin{theorem}\label{thm:wc42}
$W_c(4,2)=34$.
\end{theorem}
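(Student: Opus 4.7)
The plan is to pin down $W_c(4,2)$ exactly by combining three ingredients: an explicit witness at $M=33$ establishing the lower bound $W_c(4,2)\ge 34$, a DRAT-verified UNSAT certificate at $M=34$, and the classical bound $W(4,2)=35$ \cite{Chvatal1970} to close the range $M\ge 35$.

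For the lower bound it suffices to exhibit a single modulus $M\ge 33$ admitting a $2$-coloring of $\mathbb{Z}_M$ with no non-degenerate monochromatic $4$-AP, since this falsifies the universal quantifier in the definition of $W_c(4,2)$ at $N=33$. I would extract an explicit binary word of length $33$ from a satisfying assignment of the SAT encoding in Section~\ref{sec:wc42} and verify it directly: for each start $i\in\{0,\dots,32\}$ and each step $r\in\{1,\dots,32\}$, whenever the residues $i,i+r,i+2r,i+3r\pmod{33}$ are pairwise distinct, I would check that the window is not monochromatic. This is a finite and mechanical verification once the witness is fixed.

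For the upper bound I split into two sub-ranges. At $M=34$ I would invoke the UNSAT certificate produced by the encoding of Section~\ref{sec:wc42} (variables $x_1,\dots,x_{34}$ together with, for each non-degenerate window, the pair of clauses forbidding an all-true or an all-false assignment), solved by a proof-logging SAT solver and independently DRAT-checked; this certifies that every $2$-coloring of $\mathbb{Z}_{34}$ contains a non-degenerate monochromatic $4$-AP. For $M\ge 35$, any $2$-coloring of $\mathbb{Z}_M$ restricts via the representatives $\{1,\dots,35\}$ to a $2$-coloring of an initial segment of length $35$, so $W(4,2)=35$ produces a monochromatic $4$-AP $(a,a+d,a+2d,a+3d)\subseteq[1,35]$; since the four integers are distinct and $3d\le 34<M$, their residues modulo $M$ are also pairwise distinct, so the progression remains non-degenerate in $\mathbb{Z}_M$.

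The main obstacle is operational rather than mathematical: the only nontrivial step is the UNSAT at $M=34$, which is convincing only insofar as the SAT/DRAT pipeline is reproducible and the checker trustworthy, so the artifact must record the exact solver commands, CNF files, proof logs, DRAT-checker output, and checksums. Combining the three pieces then yields $W_c(4,2)=34$.
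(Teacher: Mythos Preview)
Your proposal is correct and follows essentially the same route as the paper: a SAT witness at $M=33$ for the lower bound, a DRAT-verified UNSAT at $M=34$, and the bound $W(4,2)=35$ to handle all $M\ge 35$. The only cosmetic differences are that the paper reports a full sweep over $M=13,\dots,34$ (logically unnecessary once the $M=33$ witness and $M=34$ certificate are in hand) and invokes the general inequality $W_c(4,2)\le W(4,2)$ for $M\ge 35$, whereas you unpack that inequality explicitly via the injective reduction $[1,35]\hookrightarrow\mathbb{Z}_M$; both amount to the same argument.
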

\begin{proof}
By \cite[Prop.~4.6]{Liber2025} we have $W_c(4,2)\ge 13$, and in general $W_c(4,2)\le W(4,2)=35$ \cite{Chvatal1970}.
We performed a finite SAT/UNSAT sweep for $M=13,\dots,34$ using the non-degenerate residue window encoding: 
we found SAT witnesses at $M\in\{14,15,18,21,22,33\}$ (explicit words in the artifact) and DRAT-verified UNSAT at all other moduli in the range, including $M=34$ (see Section~\ref{sec:wc42-sweep}). 
Hence, there exists a counterexample at $M=33$ but none at $M=34$. 
Since $W_c(4,2)\le 35$ implies that every $M\ge 35$ is also UNSAT, it follows that every $M\ge 34$ is UNSAT while $M=33$ is SAT, establishing $W_c(4,2)=34$.
\end{proof}

\begin{corollary}\label{cor:prime-classification}
Let $p\ge 5$ be prime. There exists a $2$-coloring of $\mathbb{Z}/p\mathbb{Z}$ that avoids every non-degenerate monochromatic $4$-AP if and only if $p\in\{5,7,11\}$.
\end{corollary}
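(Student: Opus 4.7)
My plan is to verify both implications separately. For the forward direction, I would simply invoke the explicit witnesses collected in the remark following the prime-classification summary: the words \texttt{BBBRR}, \texttt{BBBRBRR}, and \texttt{BBBRBBRBRRR} give period-$p$ colorings for $p\in\{5,7,11\}$. Verification for each of these reduces, by the lifting lemma (Lemma~\ref{lem:lifting}) together with Remark~\ref{rem:modp}, to checking the $p(p-1)$ non-degenerate residue windows modulo $p$. This was done explicitly for $p=7$ in the proof of Theorem~\ref{thm:p7-existence}, and the analogous finite checks for $p=5$ and $p=11$ are recorded in the artifact.

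For the reverse direction, I would split the range of remaining primes at the cyclic van der Waerden threshold $W_c(4,2)=34$. For every prime $p\ge 37$ we have $p\ge 34$, so Theorem~\ref{thm:wc42} applies immediately: every $2$-coloring of $\mathbb{Z}/p\mathbb{Z}$ contains a non-degenerate monochromatic $4$-AP, ruling out existence. That leaves the finite set $\{13,17,19,23,29,31\}$ of primes strictly between $11$ and $34$, which I would handle by citing, case by case, the DRAT-verified UNSAT certificates produced by the SAT pipeline of Section~\ref{sec:smallprime-summary}, using the same non-degenerate residue-window encoding that drives the $W_c(4,2)$ sweep.

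The only step requiring care is the bookkeeping of the case split: one must confirm that the primes in the interval $[12,33]$ are precisely $\{13,17,19,23,29,31\}$, and note that Theorem~\ref{thm:wc42} furnishes unavoidability for every modulus $M\ge 34$, not merely every prime, so the prime $p\ge 37$ regime needs no extra work beyond invoking the theorem. I do not anticipate any genuine obstacle: the substantive mathematical content has been consolidated into the lifting lemma, Theorem~\ref{thm:wc42}, and the DRAT certificates, so the corollary is purely an assembly of those pieces into an ``if and only if'' statement.
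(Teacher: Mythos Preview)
Your proposal is correct and follows essentially the same argument as the paper: explicit witnesses for $p\in\{5,7,11\}$, DRAT-verified UNSAT certificates for the finitely many primes $13\le p\le 31$, and Theorem~\ref{thm:wc42} for all primes $p\ge 34$ (equivalently $p\ge 37$). The only cosmetic slip is that the DRAT pipeline lives in Section~\ref{sec:artifacts}, not Section~\ref{sec:smallprime-summary}.
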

\begin{proof}
Existence for $p\in\{5,7,11\}$ is given by explicit witnesses and full enumeration (Section~3). Nonexistence for $p\in\{13,17,19,23,29,31\}$ is certified by our DRAT-verified UNSAT runs (Section~\ref{sec:artifacts}). For all larger primes, note that $p\ge 34$ implies nonexistence by Theorem~\ref{thm:wc42}, since every modulus $M\ge 34$ (in particular, any such prime $p$) forces a non-degenerate monochromatic $4$-AP in every $2$-coloring. This completes the classification.
\end{proof}

\section{Artifacts, checks, and exact reproduction}\label{sec:artifacts}

This section packages the verifier, enumeration/orbit data for $p\in\{5,7,11\}$,
and SAT/UNSAT encodings for $5\le p\le 997$. All scripts are mirrored in the
repository (\repo, archived snapshot: \artifactdoi).

\subsection{Enumeration pipeline}\label{sec:enum-pipeline}

We exhaustively enumerate $\{B,R\}^p$, filter valid words via the residue $4$-AP test, and write a flat list and an orbit summary.

\paragraph{Commands.}
\begin{verbatim}
# Enumerate valid words and write artifacts (example: p=7)
python3 enumerate_words.py 7

# Re-derive orbits from a solution list
python3 check_orbits.py solutions_p7.txt
python3 check_orbits.py solutions_p7.txt --with-swap

# Verify any specific word quickly
python3 verifier_strong_form.py 7 BBBRBRR
\end{verbatim}

\paragraph{Outputs.}
\begin{itemize}
  \item \texttt{solutions\_p7.txt}: one valid word per line.
  \item \texttt{orbit\_summary\_p7.json}: sizes and representatives of $D_7$ and $D_7\times\langle\tau\rangle$ orbits.
  \item JSON summary is also printed to stdout by \texttt{enumerate\_words.py}.
\end{itemize}

\noindent
The files \texttt{solutions\_p5.txt}, \texttt{solutions\_p7.txt}, \texttt{solutions\_p11.txt} included in the artifact
were generated by this pipeline.

\noindent
Note: The enumeration pipeline (Section~\ref{sec:enum-pipeline}) and the CNF/SAT/DRAT pipeline
(Section~\ref{sec:sat}) are independent. The \texttt{run\_all.sh} script
(Section~\ref{sec:runner}) automates the CNF/SAT/DRAT pipeline for all primes
$5\le p\le 997$, it does not run the enumeration.

\subsection{Residue-check protocol (42 checks when $p=7$) and a verifier}\label{sec:verifier}
For a prime $p$ and a word $w\in\{B,R\}^p$, the check runs over all residue steps $r\in\{1,\dots,p-1\}$ and starts $i\in\{0,\dots,p-1\}$, and rejects if and only if some window $(i,i+r,i+2r,i+3r)\pmod{p}$ is monochromatic.

\medskip\noindent
\textbf{Script:} \texttt{verifier\_strong\_form.py}. Usage:
\[
\texttt{python3\ verifier\_strong\_form.py\ 7\ BBBRBRR}
\]
prints \texttt{OK} for the witness in Theorem~\ref{thm:p7-existence}. Any failure prints \texttt{FAIL}.

\begin{lstlisting}[language=Python]
#!/usr/bin/env python3
import sys
if len(sys.argv) != 3:
    print("usage: verifier_strong_form.py <prime p> <word>"); 
    raise SystemExit(2)

p = int(sys.argv[1]); 
w = sys.argv[2].strip().upper()
assert p >= 2 and len(w) == p and set(w)<=set("BR")

for r in range(1,p):
  for i in range(p):
    win=[w[(i+k*r)%p] for k in range(4)]
    if win.count('B')==4 or win.count('R')==4: 
      print("FAIL"); raise SystemExit(1)
print("OK")
\end{lstlisting}

\subsection{Enumeration and orbit counts for $p=5,7,11$}\label{sec:enum-orbits}
We enumerate all words and keep exactly those that pass the verifier. Files:
\begin{itemize}
  \item \texttt{solutions\_p5.txt}, \texttt{solutions\_p7.txt}, \texttt{solutions\_p11.txt} (one word per line).
  \item \texttt{orbit\_summary\_p5.json}, \texttt{orbit\_summary\_p7.json}, \texttt{orbit\_summary\_p11.json}.
\end{itemize}
These confirm the counts in Table~\ref{tab:smallprimes}:
\[
|{\rm Sol}_5|=20,\quad |{\rm Sol}_7|=28,\quad |{\rm Sol}_{11}|=44,
\]
with orbit data:
\[
\#\text{orbits under }D_5=4,\ D_7=2,\ D_{11}=2,\qquad
\#\text{orbits under }D_p\times\langle\tau\rangle=2,1,1.
\]

\medskip\noindent
\textbf{Script:} \texttt{check\_orbits.py} (computes orbits from any \texttt{solutions\_p*.txt}).
\begin{lstlisting}[language=Python]
#!/usr/bin/env python3
import sys, json

USAGE = "usage: check_orbits.py <solutions_pX.txt> [--with-swap]"

if len(sys.argv) < 2 or len(sys.argv) > 3:
    print(USAGE); raise SystemExit(2)

words = sorted({line.strip().upper() for line in open(sys.argv[1]) if line.strip()})
with_swap = (len(sys.argv) == 3 and sys.argv[2] == "--with-swap")

def rots(w):
    return [w[i:] + w[:i] for i in range(len(w))]

def dihedral_orbit(w):
    #rotations + the reflection of each rotation generate all D_n elements
    orb = set()
    for r in rots(w):
        #rotation
        orb.add(r)
        #reflection after that rotation
        orb.add(r[::-1])
    return orb
  
#global color swap tau
def swap_colors(w): 
    return w.translate(str.maketrans("BR", "RB"))

def orbit(w):
    if not with_swap:
        return dihedral_orbit(w)
    #include global swap
    return dihedral_orbit(w) | dihedral_orbit(swap_colors(w))

unseen = set(words)
reps, sizes = [], []

while unseen:
    w = min(unseen)
    o = orbit(w) & set(words)
    reps.append(min(o))
    sizes.append(len(o))
    unseen -= o

print(json.dumps({
    "num_words": len(words),
    "num_orbits": len(sizes),
    "orbit_sizes": sizes,
    "reps": reps,
    "with_swap": with_swap
}, indent=2))
\end{lstlisting}

\subsection{CNF encoding of the avoidance constraints}\label{sec:cnf}
Let variables $x_1,\dots,x_p$ encode $w_0,\dots,w_{p-1}$ with $x_{j+1}=\text{true}\iff w_j=R$. For every window $\{a,b,c,d\}=(i,i+r,i+2r,i+3r)\pmod{p}$ with $a,b,c,d\in\{0,\dots,p-1\}$ add the two clauses
\[
(x_{a+1}\vee x_{b+1}\vee x_{c+1}\vee x_{d+1})\quad\text{and}\quad(\neg x_{a+1}\vee \neg x_{b+1}\vee \neg x_{c+1}\vee \neg x_{d+1}).
\]

This forbids monochromatic \texttt{BBBB} and \texttt{RRRR}. The instance has $p$ variables and $2p(p-1)$ clauses.

\medskip\noindent
\textbf{Script:} \texttt{make\_cnf.py}.
\begin{lstlisting}[language=Python]
#!/usr/bin/env python3
import sys
if len(sys.argv)!=3:
  print("usage: make_cnf.py <prime p> <out.cnf>"); 
  raise SystemExit(2)

p=int(sys.argv[1]); 
out=sys.argv[2]

def idx(i): 
  return i+1

def windows(p):
  for r in range(1,p):
    for i in range(p):
      yield [(i+k*r)%p for k in range(4)]
      
clauses=[]
for win in windows(p):
  vs=[idx(j) for j in win]
  clauses.append(vs)
  clauses.append([-v for v in vs])
with open(out,'w') as f:
  f.write(f"p cnf {p} {len(clauses)}\n")
  for C in clauses: 
    f.write(" ".join(map(str,C))+" 0\n")
\end{lstlisting}

\medskip\noindent
\textbf{Script:} \texttt{model\_to\_word.py} (DIMACS model $\to$ \texttt{B}/\texttt{R} string).
\begin{lstlisting}[language=Python]
#!/usr/bin/env python3
import sys, re

if len(sys.argv) != 3:
    print("usage: model_to_word.py <p> <solver_output>")
    raise SystemExit(2)

p = int(sys.argv[1])
path = sys.argv[2]

vals = {}  #var index -> boolean
for line in open(path, 'r', encoding='utf-8', errors='ignore'):
    #accept typical SAT outputs: lines may start with 'v', 's', etc.
    for tok in line.split():
        if re.fullmatch(r"-?\d+", tok):
            v = int(tok)
            if v == 0:
                continue
            vals[abs(v)] = (v > 0)

#default any missing variable to False (= 'B') to be safe
word = "".join('R' if vals.get(i, False) else 'B' for i in range(1, p+1))
print(word)
\end{lstlisting}

\subsection{SAT/UNSAT runs and proof verification}\label{sec:sat}
For SAT cases ($p=5,7,11$) we decode any model via \texttt{model\_to\_word.py} and then check it with \texttt{verifier\_strong\_form.py}. For UNSAT cases ($p\ge 13$ up to $997$) we log a textual DRAT proof with CaDiCaL \cite{cadical} and verify it using \texttt{drat-trim} \cite{WetzlerHeuleHunt2014,drattrim,Heule2016}.

\medskip\noindent
\textbf{Commands.}
\begin{verbatim}
# Build CNF
python3 make_cnf.py 7 avoid_p7.cnf

# SAT: get a witness (either solver works)
kissat -q avoid_p7.cnf > solver_p7.out
# or: cadical avoid_p7.cnf > solver_p7.out

# Decode and check the witness
python3 model_to_word.py 7 solver_p7.out   # prints e.g. BBBRBRR
python3 verifier_strong_form.py 7 BBBRBRR  # prints 'OK' on success

# UNSAT (example p=13): CaDiCaL + drat-trim
python3 make_cnf.py 13 avoid_p13.cnf
cadical avoid_p13.cnf avoid_p13.drat > solver_p13.log   # writes textual DRAT
drat-trim avoid_p13.cnf avoid_p13.drat -q       # prints 's VERIFIED' on success
\end{verbatim}

\paragraph{Environment.}
All runs were performed with Python~3.10, \texttt{kissat}~4.0.3 \cite{kissat}, \texttt{CaDiCaL}~2.1.3 \cite{cadical}, 
and \texttt{drat-trim} \cite{drattrim} on a standard Linux machine.

\noindent
Note: Some Kissat \cite{kissat} builds do not expose proof logging on the CLI. Thus, we use CaDiCaL \cite{cadical} to emit proofs and \texttt{drat-trim} to verify them \cite{WetzlerHeuleHunt2014,Heule2016}.  

\subsection{Quick-start (one-button) runner}\label{sec:runner}
\textbf{Script:} \texttt{run\_all.sh} builds CNFs for $5\le p\le 997$, solves SAT cases (printing a witness word if a SAT solver is available), and, for $p\ge 13$, emits DRAT proofs and checks them if \texttt{drat-trim} is installed.
\begin{lstlisting}
#!/usr/bin/env bash
set -euo pipefail

# Usage: ./run_all.sh [MAX_PRIME]
# Default MAX_PRIME is 997 if not provided.
MAXP="${1:-997}"

#prefer kissat for SAT speed if present; fall back to CaDiCaL.
SAT_SOLVER=""
if command -v kissat >/dev/null 2>&1; then
  SAT_SOLVER="kissat"
elif command -v cadical >/dev/null 2>&1; then
  SAT_SOLVER="cadical"
else
  echo "No SAT solver found (need kissat or cadical)"; exit 1
fi

#prefer CaDiCaL for UNSAT proof logging.
HAVE_CADICAL=0
command -v cadical >/dev/null 2>&1 && HAVE_CADICAL=1

HAVE_DRAT=0
command -v drat-trim >/dev/null 2>&1 && HAVE_DRAT=1

#generate primes 5..MAXP (simple sieve via Python).
PRIMES="$(python3 - "$MAXP" <<'PY'
import sys
MAX=int(sys.argv[1])
isp=[True]*(MAX+1)
if MAX>=0: isp[0]=False
if MAX>=1: isp[1]=False
import math
for i in range(2,int(math.isqrt(MAX))+1):
    if isp[i]:
        step=i
        start=i*i
        isp[start:MAX+1:step]=[False]*(((MAX-start)//step)+1)
print(" ".join(str(p) for p in range(5,MAX+1) if isp[p]))
PY
)"

SAT_P=()
UNSAT_P=()

for p in $PRIMES; do
  cnf=avoid_p${p}.cnf
  out=solver_p${p}.out
  echo "=== p=${p} ==="
  python3 make_cnf.py ${p} ${cnf}

  if (( p >= 13 )); then
    if (( HAVE_CADICAL )); then
      #solve with CaDiCaL and (attempt to) emit textual DRAT proof.
      #CaDiCaL syntax: cadical <cnf> <proof>
      cadical ${cnf} avoid_p${p}.drat > ${out} || true

      #check solver status from output.
      satline=$(grep -m1 -E '^s (SATISFIABLE|UNSATISFIABLE)' ${out} || true)
      if echo "$satline" | grep -q 'UNSAT'; then
        #verify DRAT if drat-trim is available.
        if (( HAVE_DRAT )); then
          if drat-trim ${cnf} avoid_p${p}.drat -q; then
            echo "DRAT verified for p=${p}"
            UNSAT_P+=("${p}")
          else
            echo "DRAT check FAILED for p=${p}"; exit 1
          fi
        else
          echo "drat-trim not found; skipped proof check for p=${p}"
          UNSAT_P+=("${p}")
        fi
      elif echo "$satline" | grep -q 'SATISFIABLE'; then
        echo "Unexpected SAT from CaDiCaL for p=${p}; extracting a model..."
        #get a model with the chosen SAT solver and verify it.
        ${SAT_SOLVER} -q ${cnf} > ${out}.sat || true
        satline2=$(grep -m1 -E '^s (SATISFIABLE|UNSATISFIABLE)' ${out}.sat || true)
        if echo "$satline2" | grep -q 'UNSAT'; then
          echo "WARNING: ${SAT_SOLVER} claims UNSAT too for p=${p}."
        else
          w=$(python3 model_to_word.py ${p} ${out}.sat)
          echo "witness p=${p}: ${w}"
          python3 verifier_strong_form.py ${p} "${w}"
          SAT_P+=("${p}")
        fi
      else
        echo "WARNING: could not parse solver status for p=${p} (see ${out})."
      fi
    else
      echo "WARNING: CaDiCaL not found; solving p=${p} without a proof."
      ${SAT_SOLVER} -q ${cnf} > ${out} || true
      satline=$(grep -m1 -E '^s (SATISFIABLE|UNSATISFIABLE)' ${out} || true)
      if echo "$satline" | grep -q 'SATISFIABLE'; then
        w=$(python3 model_to_word.py ${p} ${out})
        echo "witness p=${p}: ${w}"
        python3 verifier_strong_form.py ${p} "${w}"
        SAT_P+=("${p}")
      else
        echo "UNSAT (no proof logged) for p=${p}"
        UNSAT_P+=("${p}")
      fi
    fi

  else
    if [[ "${SAT_SOLVER}" == "kissat" ]]; then
      kissat -q ${cnf} > ${out} || true
    else
      cadical ${cnf} > ${out} || true
    fi
    satline=$(grep -m1 -E '^s (SATISFIABLE|UNSATISFIABLE)' ${out} || true)
    if echo "$satline" | grep -q 'UNSAT'; then
      echo "Unexpected UNSAT for p=${p}. Check ${out}."
      continue
    fi
    w=$(python3 model_to_word.py ${p} ${out})
    echo "witness p=${p}: ${w}"
    python3 verifier_strong_form.py ${p} "${w}"
    SAT_P+=("${p}")
  fi
done

echo
echo "==== SUMMARY ===="
if ((${#SAT_P[@]})); then
  echo "SAT primes (witness found): ${SAT_P[*]}"
else
  echo "SAT primes (witness found): none"
fi

if ((${#UNSAT_P[@]})); then
  echo "UNSAT primes (DRAT verified or declared): ${UNSAT_P[*]}"
else
  echo "UNSAT primes (DRAT verified or declared): none"
fi
\end{lstlisting}

\subsection{Cyclic sweep for $W_c(4,2)$ (moduli $M=13,\dots,34$) and per-modulus artifacts}\label{sec:wc42-sweep}

We include a small sweep to decide the cyclic van der Waerden value at $(k,r)=(4,2)$ by testing composite moduli $M\in\{13,\dots,34\}$. 
The CNF encoding adds clauses only for non-degenerate residue $4$-AP windows in $\mathbb{Z}_M$ (i.e., four distinct residues). 
The script prints a table and also writes per-modulus artifacts (CNF, witness/model or DRAT proof) to a results directory.

\paragraph{Commands.}
\begin{verbatim}
# Default sweep M=13..34, artifacts under ./results/
python3 find_wc42.py

# Choose a different output folder
python3 find_wc42.py --outdir out_wc42

# Prefer Kissat even if CaDiCaL is available (no DRAT)
python3 find_wc42.py --prefer-kissat
\end{verbatim}

\paragraph{Outputs.}
For each tested modulus $M$, the script writes to \texttt{<outdir>} (default \texttt{results/}):
\begin{itemize}
  \item \texttt{avoid\_M\textit{XX}.cnf} \hspace{0.35em} (DIMACS with non-degenerate windows only),
  \item if SAT: \texttt{model\_M\textit{XX}.txt} (raw solver output) and \texttt{witness\_M\textit{XX}.txt} (word over \texttt{B}/\texttt{R}),
  \item if UNSAT with CaDiCaL: \texttt{proof\_M\textit{XX}.drat} and \texttt{proof\_M\textit{XX}.drat.check.txt} (from \texttt{drat-trim}),
  \item summary tables \texttt{wc42\_results.csv} and \texttt{wc42\_results.tsv}.
\end{itemize}

\paragraph{Standalone composite-aware verifier.}
\texttt{verifier\_cyclic.py} re-checks any reported witness by testing only non-degenerate residue windows in $\mathbb{Z}_M$:
\begin{verbatim}
python3 verifier_cyclic.py 22 RRRBRRBRBBBRRRBRRBRBBB
python3 verifier_cyclic.py 33 BBBRBRRBRRRBBBRBRRBRRRBBBRBRRBRRR
\end{verbatim}

\medskip\noindent
\textbf{Script:} \texttt{find\_wc42.py}.
\begin{lstlisting}[language=Python]
#!/usr/bin/env python3
"""
Requires: cadical (preferred) or kissat, drat-trim preferred.
"""

import argparse, os, shutil, subprocess, sys
from typing import List, Tuple

#filesystem helpers

def ensure_dir(d: str):
    os.makedirs(d, exist_ok=True)

def write_text(path: str, text: str):
    with open(path, "w", encoding="utf-8") as f:
        f.write(text)

def append_text(path: str, text: str):
    with open(path, "a", encoding="utf-8") as f:
        f.write(text)

#system helpers

def have(cmd: str) -> bool:
    return shutil.which(cmd) is not None

def run(cmd: List[str], *, input_text: str = "") -> subprocess.CompletedProcess:
    return subprocess.run(cmd, input=input_text, text=True, capture_output=True, check=False)

def solver_status(stdout: str) -> str:
    for line in stdout.splitlines():
        if line.startswith("s "):
            if "UNSATISFIABLE" in line: return "UNSAT"
            if "SATISFIABLE" in line:   return "SAT"
    return "UNKNOWN"

#combinatorics

def nondeg_windows(M: int):
    for r in range(1, M):
        for i in range(M):
            w=[(i+k*r)%M for k in range(4)]
            if len(set(w))==4:
                yield tuple(w)

def is_valid_witness(word: str) -> bool:
    M = len(word)
    for a,b,c,d in nondeg_windows(M):
        block = [word[a], word[b], word[c], word[d]]
        if block.count('B')==4 or block.count('R')==4:
            return False
    return True

def build_dimacs_for_M(M: int) -> str:
    clauses=[]
    def var(i: int) -> int: return i+1
    for a,b,c,d in nondeg_windows(M):
        vs=[var(a),var(b),var(c),var(d)]
        clauses.append(vs)               # forbid BBBB
        clauses.append([-v for v in vs]) # forbid RRRR
    lines=[f"p cnf {M} {len(clauses)}"]
    for C in clauses: lines.append(" ".join(map(str,C))+" 0")
    return "\n".join(lines)+"\n"

def decode_model_strict(stdout: str, M: int) -> str | None:
    """Parse only 'v ' lines; return word over {B,R} or None if not found."""
    vals = {}
    saw = False
    model_lines=[]
    for line in stdout.splitlines():
        if line.startswith("v "):
            saw = True
            model_lines.append(line+"\n")
            for tok in line.split()[1:]:
                if tok == '0': continue
                if tok.lstrip('-').isdigit():
                    v=int(tok); vals[abs(v)] = (v>0)
    if not saw: return None
    word = "".join('R' if vals.get(i,False) else 'B' for i in range(1, M+1))
    return word

#solve one modulus

def solve_one(M: int, outdir: str, prefer_kissat: bool) -> Tuple[str, str, str]:
    """
    Returns (status, witness_or_note, proof_note)
    and writes artifacts into outdir.
    """
    ensure_dir(outdir)
    cnf_path   = os.path.join(outdir, f"avoid_M{M}.cnf")
    drat_path  = os.path.join(outdir, f"proof_M{M}.drat")
    check_path = os.path.join(outdir, f"proof_M{M}.drat.check.txt")
    model_path = os.path.join(outdir, f"model_M{M}.txt")
    wit_path   = os.path.join(outdir, f"witness_M{M}.txt")

    write_text(cnf_path, build_dimacs_for_M(M))

    have_cad   = have("cadical")
    have_kisat = have("kissat")
    have_drat  = have("drat-trim")

    use_cad = (have_cad and not prefer_kissat) or (have_cad and not have_kisat)

    if use_cad:
        proc = run(["cadical", cnf_path, drat_path])
        st = solver_status(proc.stdout)
        if st == "SAT":
            #save full stdout for provenance (includes v-lines)
            write_text(model_path, proc.stdout)
            word = decode_model_strict(proc.stdout, M)
            #fallback to kissat for clean model if needed
            if (word is None or not is_valid_witness(word)) and have_kisat:
                k = run(["kissat", "-q", cnf_path])
                if solver_status(k.stdout) == "SAT":
                    append_text(model_path, "\n# kissat stdout\n"+k.stdout)
                    w2 = decode_model_strict(k.stdout, M)
                    if w2 is not None: word = w2
            if word is None:
                return "SAT", "no model parsed", ""
            write_text(wit_path, word+"\n")
            note = ""
            if not is_valid_witness(word):
                note = "witness FAILS self-check"
            return "SAT", word if not note else f"{word}\t{note}", ""
        elif st == "UNSAT":
            proofnote = "no drat-trim"
            if have_drat:
                chk = run(["drat-trim", cnf_path, drat_path, "-q"])
                write_text(check_path, chk.stdout + chk.stderr)
                if ("s VERIFIED" in chk.stdout) or (chk.returncode == 0):
                    proofnote = "DRAT ok"
                else:
                    proofnote = "DRAT check failed"
            return "UNSAT", "", proofnote
        else:
            return "UNKNOWN", "", ""
    else:
        if not have_kisat:
            return "UNKNOWN", "", "no solver found"
        proc = run(["kissat", "-q", cnf_path])
        st = solver_status(proc.stdout)
        if st == "SAT":
            write_text(model_path, proc.stdout)
            word = decode_model_strict(proc.stdout, M)
            if word is None:
                return "SAT", "no model parsed", ""
            write_text(wit_path, word+"\n")
            note = ""
            if not is_valid_witness(word):
                note = "witness FAILS self-check"
            return "SAT", word if not note else f"{word}\t{note}", ""
        elif st == "UNSAT":
            return "UNSAT", "", ""  # no DRAT
        else:
            return "UNKNOWN", "", ""

#CLI

def main():
    ap = argparse.ArgumentParser(description="Sweep M to help determine W_c(4,2) with file outputs.")
    ap.add_argument("--start", type=int, default=13)
    ap.add_argument("--end", type=int, default=34)
    ap.add_argument("--outdir", type=str, default="results")
    ap.add_argument("--csv", type=str, default="wc42_results.csv")
    ap.add_argument("--tsv", type=str, default="wc42_results.tsv")
    ap.add_argument("--prefer-kissat", action="store_true",
                    help="Prefer Kissat even if CaDiCaL is available (no DRAT).")
    args = ap.parse_args()

    ensure_dir(args.outdir)
    csv_path = os.path.join(args.outdir, args.csv) if args.csv else ""
    tsv_path = os.path.join(args.outdir, args.tsv) if args.tsv else ""

    rows = []
    print("M\tStatus\tWitness/Note\tProof")
    for M in range(args.start, args.end + 1):
        st, w, pr = solve_one(M, args.outdir, args.prefer_kissat)
        print(f"{M}\t{st}\t{w}\t{pr}")
        rows.append((M, st, w, pr))

    if csv_path:
        write_text(csv_path, "M,Status,Witness,Proof\n")
        for M, st, w, pr in rows:
            wq  = f"\"{w}\""  if ("," in w)  else w
            prq = f"\"{pr}\"" if ("," in pr) else pr
            append_text(csv_path, f"{M},{st},{wq},{prq}\n")
        print(f"[wrote] {csv_path}")

    if tsv_path:
        write_text(tsv_path, "M\tStatus\tWitness\tProof\n")
        for M, st, w, pr in rows:
            append_text(tsv_path, f"{M}\t{st}\t{w}\t{pr}\n")
        print(f"[wrote] {tsv_path}")

    #friendly summary
    sat_33 = any(M == 33 and st == "SAT" for (M, st, _, _) in rows)
    unsat_34 = any(M == 34 and st.startswith("UNSAT") for (M, st, _, _) in rows)
    if sat_33 and unsat_34:
        print("\nSummary: SAT at M=33 and UNSAT at M=34 -> W_c(4,2)=34.\n")

if __name__ == "__main__":
    main()
\end{lstlisting}

\medskip\noindent
\textbf{Script:} \texttt{verifier\_cyclic.py}.
\begin{lstlisting}[language=Python]
#!/usr/bin/env python3
import sys

USAGE = "usage: verifier_cyclic.py <modulus M> <wordoverBR>"

def nondeg_windows(M):
    for r in range(1, M):
        for i in range(M):
            w = [(i + k*r) % M for k in range(4)]
            if len(set(w)) == 4:
                yield w

if __name__ == "__main__":
    if len(sys.argv) != 3:
        print(USAGE); sys.exit(2)
    M = int(sys.argv[1])
    w = sys.argv[2].strip().upper()
    assert len(w) == M and set(w) <= set("BR")
    for a,b,c,d in nondeg_windows(M):
        block = [w[a], w[b], w[c], w[d]]
        if block.count('B') == 4 or block.count('R') == 4:
            print("FAIL at", (a,b,c,d), "block=", "".join(block))
            sys.exit(1)
    print("OK")
\end{lstlisting}

\subsection{Word lists and manifest}\label{sec:manifest}
We include \texttt{solutions\_p5.txt}, \texttt{solutions\_p7.txt}, \texttt{solutions\_p11.txt} (one word per line). The file \texttt{artifact\_manifest.json} records filenames and SHA-256 hashes for reproducibility.

\paragraph{Acknowledgments.}
I thank Vito Valentino (ufl.edu) for pointing out the connection to cyclic van der Waerden numbers and for suggesting the finite sweep that led to the computation of $W_c(4,2)$.

\bibliographystyle{amsplain}
\bibliography{refs}

\end{document}